\newtheorem{theorem}{Theorem}
\newtheorem{lemma}[theorem]{Lemma}
\newtheorem{remark}[theorem]{Remark}
\def\e{\varepsilon}
\def\irr#1{{\rm Irr}(#1)}
\def\irrr#1#2 {\irr {#1 \mid #2}}
\newcommand{\R}{\mathbb R}
\newcommand{\F}{e^{-\frac{|y|^p}{p}}}
\begin{document}

\title{Maximal surface area of a convex set in $\R^n$ with respect to exponential rotation invariant measures.\\
\hfill\\
{\tiny{Journal of Mathematical Analysis and Applications, Volume 404, Issue 2, 15 August 2013, Pages 231-238}}}
 \subjclass[2010]{Primary:  44A12, 52A15, 52A21}
  \keywords{ convex bodies, convex polytopes, Surface area, Gaussian measures}
\author{Galyna Livshyts}
\address{Department of Mathematics, Kent State University,
Kent, OH 44242, USA} \email{glivshyt@kent.edu}
\date{March 2013}

\begin{abstract}
 Let $p$ be a positive number. Consider probability measure $\gamma_p$ with density
 $\varphi_p(y)=c_{n,p}e^{-\frac{|y|^p}{p}}$. We show that the maximal surface area of a convex body in $\R^n$ with respect to $\gamma_p$ is asymptotically equal to $C_p n^{\frac{3}{4}-\frac{1}{p}}$, where constant $C_p$ depends on $p$ only. This is a generalization of Ball's \cite{ball} and Nazarov's  \cite{fedia}
 bounds, which were given for the case of the standard Gaussian measure $\gamma_2$.
\end{abstract}
\maketitle
\section {Introduction}
As usual,  $|\cdot|$ denotes the norm in Euclidean $n$-space $\R^n$, and $|A|$ stands for the Lebesgue measure  of a measurable set $A \subset \R^n$. We will write     $B_2^n=\{x\in \R^n: |x| \le 1\}$ for the unit ball in $\R^n$, $S^{n-1}=\{x\in \R^n: |x|=1\}$ for the
 unit $n$-dimensional sphere. We will denote by  $\nu_n=|B_2^n|=\pi^{\frac{n}{2}}/\Gamma(\frac{n}{2}+1)$.

In this paper we will study the geometric properties of measures  $\gamma_p$ on $\R^n$ with density
$$\varphi_p(y)=c_{n,p}\F,$$
where $p\in (0,\infty)$ and $c_{n,p}$ is the normalizing constant.

Many interesting results are known for the  case $p=2$  (standard Gaussian measure). One must mention  the Gaussian  isoperimetric inequality of Borell \cite{B} and Sudakov, Tsirelson \cite{ST}:   fix some $a \in (0,1)$ and $\e >0$, then  among all measurable sets  $A \subset \R^n$, with $\gamma_2(A)=a$  the set for which $\gamma_2(A+\e B_2^n)$ has the smallest Gaussian measure is half-space. We refer to books \cite{bogachev} and \cite{LT} for more properties of Gaussian measure and inequalities of this type.

Mushtari and Kwapien asked the reverse version of isoperimetric inequality, i.e. how large the Gaussian surface area of a convex set $A\subset \R^n$ can be. In \cite{ball} it was shown, that Gaussian surface area of a convex body in $\R^n$ is asymptotically bounded by $C n^{\frac{1}{4}}$, where $C$ is an absolute constant. Nazarov in \cite{fedia} gave the complete solution to this problem by proving the sharpness of Ball's result:
$$0.28 n^{\frac{1}{4}}\leq \max \gamma_2(\partial Q)\leq 0.64 n^{\frac{1}{4}},$$
where maximum is taken over all convex bodies. Further estimates for $\gamma_2(\partial Q)$ were provided in \cite{kane}.

Isoperimetric inequalities for rotation invariant measures were studied by Sudakov, Tsirelson \cite{ST}, who proved that for a measure $\gamma$ with density $e^{-h(\log|x|)}$, where $h(t)$ is a positive convex function, there exist derivative of a function $M_Q(a)=\gamma(aQ)$ (where $Q$ is a convex body), and minimum of $M_Q'(1)$ among all convex bodies is attained on half spaces. Thus the result can be applied to measures $\gamma_p$ by setting   $h(t)=\frac{e^{pt}}{p}$. Some interesting results for manifolds with density were also provided by Bray and Morgan \cite{BM} and further generalized by Maurmann and Morgan \cite{MM}.

The main goal of this paper is to compliment the  study of isoperimetric problem for rotation invariant measures and to prove an inverse isoperimetric inequality for $\gamma_p$, which is done using the generalization of Nazarov's method from \cite{fedia}.

We remind that the surface area of a convex body $Q$ with respect to the measure $\gamma_p$ is defined to be
\begin{equation}\label{outer}
\gamma_p(\partial Q)=\liminf_{\epsilon\rightarrow +0}\frac{\gamma_p((Q+\epsilon B_2^n)\backslash Q)}{\epsilon}.
\end{equation}
One can also provide an integral formula for $\gamma_p(\partial Q)$:
\begin{equation}\label{def2}
\gamma_p(\partial Q)=\int_{\partial Q} \varphi_p(y)d\sigma(y)=c_{n,p}\int_{\partial Q} \F d\sigma(y),
\end{equation}
where $d\sigma(y)$ stands for Lebesgue surface measure. We refer to \cite{kane} for the proof in the case $p=2$.

The following theorem is the main result of this paper:
\begin{theorem}\label{th:main}
For any positive $p$
$$e^{-\frac{9}{4}} n^{\frac{3}{4}-\frac{1}{p}}\leq \max \gamma_p(\partial Q)\leq C(p) n^{\frac{3}{4}-\frac{1}{p}},$$
where $C(p)\approx 2\sqrt[4]{2\pi}c_1 e^{-(\frac{c_2}{p}+c_3 p)} p^{\frac{3}{4}}$.
\end{theorem}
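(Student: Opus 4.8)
The plan is to prove the two bounds separately, both by reducing the surface integral $\gamma_p(\partial Q)=c_{n,p}\int_{\partial Q}\F\,d\sigma(y)$ to a one–dimensional computation via the rotation invariance of $\gamma_p$.

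For the lower bound I would exhibit a concrete convex body realizing the order $n^{3/4-1/p}$. The natural candidate, following Nazarov's construction in the Gaussian case, is a polytope whose facets are all tangent to a sphere of a carefully chosen radius $r=r(n,p)$: take $Q$ to be the intersection of roughly $N$ half-spaces $\{\langle x,\theta_i\rangle\le r\}$ with the $\theta_i$ a well-separated (e.g. random or net) set of unit vectors, so that $Q$ looks like $rB_2^n$ from inside but has total facet area of order $N r^{n-1}\nu_{n-1}$ near the sphere of radius $r$. Evaluating $\int_{\partial Q}\F\,d\sigma$ and the normalizing constant $c_{n,p}$ by passing to polar coordinates (so $c_{n,p}^{-1}=n\nu_n\int_0^\infty t^{n-1}e^{-t^p/p}\,dt$, an explicit Gamma-function expression), one finds the density $\F$ is essentially $e^{-r^p/p}$ on $\partial Q$; optimizing the product of $e^{-r^p/p}$, the facet count $N$, and the surface-area factor $r^{n-1}$ over $r$ forces $r\approx n^{1/p}$ and yields the claimed $e^{-9/4}n^{3/4-1/p}$ after estimating the Gamma factors by Stirling. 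The main subtlety here is controlling the facet count $N$ needed for the polytope to have the right shape (this is where the exponent $\tfrac34$ rather than $\tfrac{n-1}{2}$ comes from) and checking that the "corners" of $Q$ contribute negligibly to the surface integral.

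For the upper bound I would follow Nazarov's localization argument. Using \eqref{def2} and the outer-Minkowski definition \eqref{outer}, reduce to bounding $\int_{\partial Q}\F\,d\sigma$ for an arbitrary convex $Q$. Decompose space into spherical shells $\{2^{k}\le|y|<2^{k+1}\}$ (or shells adapted to the scale $n^{1/p}$ where the measure $\gamma_p$ concentrates), and on each shell bound the surface area of $\partial Q$ inside it. Convexity gives, for each shell of inner radius $\rho$, a bound on $\sigma(\partial Q\cap\{|y|\approx\rho\})$ of order the surface area of the sphere of that radius times a factor coming from how a convex hypersurface can wind inside an annulus — essentially $\sigma(\rho S^{n-1})$ times $\sqrt{n}$ type gain, controlled via a Cauchy-type formula bounding surface area by projections, or via the estimate on the integral of the "reciprocal Gauss curvature." Multiplying by the density $c_{n,p}e^{-\rho^p/p}$ and summing the resulting series over shells, the dominant contribution comes from $\rho\approx n^{1/p}$, and one extracts $C(p)n^{3/4-1/p}$; the constant $C(p)\approx 2\sqrt[4]{2\pi}\,c_1 e^{-(c_2/p+c_3 p)}p^{3/4}$ emerges from the Stirling asymptotics of $c_{n,p}$ together with the absolute constants $c_1,c_2,c_3$ produced by the convex-geometric covering step.

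The hard part will be the upper bound, specifically the convex-geometric estimate bounding the Lebesgue surface area of $\partial Q$ inside a fixed annulus by (a constant times) $\sqrt{n}$ times the area of the bounding sphere — this is the heart of Ball's and Nazarov's arguments and the only place where the convexity hypothesis does real work; adapting it uniformly in $p$ (so that the $p$-dependence lands only in $c_{n,p}$ and the optimal radius $n^{1/p}$) while keeping the constants explicit enough to recover the stated $C(p)$ is the delicate step. By comparison, the lower-bound construction is more routine once the radius is optimized.
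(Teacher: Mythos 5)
Your lower-bound construction has a genuine flaw. You propose a polytope whose facets are tangent to the sphere of radius $r\approx n^{1/p}$, so that $Q$ ``looks like $rB_2^n$''; but any such body has $\gamma_p$-surface area of order at most $\sqrt{p}\,n^{\frac12-\frac1p}$, not $n^{\frac34-\frac1p}$. Indeed, at boundary points whose outer normal is nearly radial ($\alpha\approx 1$, where $\alpha$ is the cosine of the angle between $y$ and $\nu_y$), the radial polar-coordinate computation forces $\gamma_p(\partial Q)\lesssim \sqrt{\tfrac{p}{2\pi}}\,n^{\frac12-\frac1p}$; a polytope circumscribed about the concentration sphere has $\alpha\approx 1$ exactly at the points that carry the mass, so it can never beat the ball. (Your bookkeeping ``total facet area of order $N r^{n-1}\nu_{n-1}$'' also overcounts: the facets of a circumscribed polytope tile a set of area comparable to the sphere, independently of $N$.) The extra factor $n^{1/4}$ comes from making the boundary nearly \emph{radial} where the measure lives: in the paper the half-spaces are $\{\langle x,x_i\rangle\le\rho\}$ with $\rho=n^{\frac1p-\frac14}$ (much smaller than the concentration radius), so a typical boundary point at radius $\approx n^{1/p}$ has $\alpha=\rho/|y|\approx n^{-1/4}$; one then needs about $N\approx n^{1/4}e^{\sqrt n/2}$ random facets, and the proof is an expectation computation involving the probability $p(r)$ that a boundary point is cut off by another facet. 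Optimizing only over the common distance $r$ of tangent facets, as you suggest, cannot produce the exponent $\tfrac34$.

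The upper bound as sketched also has a gap: you defer everything to a ``convex-geometric estimate bounding the Lebesgue surface area of $\partial Q$ inside a fixed annulus by $\sqrt n$ times the area of the bounding sphere,'' but no proof of such an estimate is offered, and as stated the factor is wrong. To get $C(p)n^{\frac34-\frac1p}$ you need a gain of order $n^{1/4}$ over the ($\gamma_p$-weighted) sphere contribution $\sim n^{\frac12-\frac1p}$; a factor $\sqrt n$ would only reproduce the trivial bound $\approx n^{1-\frac1p}$ already obtained in Section 2 by Ball's trick, and neither a Cauchy projection formula nor a curvature-integral argument is known to give the sharp $n^{1/4}$ directly. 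The paper's actual argument is different and is where convexity does the work: for two polar-type parametrizations $X_1(y,t)=yt$ and $X_2(y,t)=y+t\nu_y$ one has $1=\int_{\partial Q}\varphi_p\,\xi_i\,d\sigma$, with the Jacobian bounds giving pointwise $\xi_1(y)\gtrsim \sqrt{2\pi/p}\,n^{\frac1p-\frac12}\alpha$ and $\xi_2(y)\gtrsim c(p)\,n^{\frac1p-\frac12}\big(1+c_1(p)\alpha\sqrt n\big)^{-1}$ on the concentration annulus; minimizing $\xi_1+\xi_2$ over $\alpha$ (the minimum is at $\alpha\sim n^{-1/4}$) yields $\gamma_p(\partial Q\cap A_p)\le C(p)n^{\frac34-\frac1p}$, and a separate concentration lemma handles $\partial Q$ outside the annulus. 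Your sketch captures neither this interplay between the normal direction and the radial direction nor any substitute for it, so the key step remains unproved.
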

In Theorem \ref{th:main} and further we will denote by  ''$\approx$''  an asymptotic equality while $p$   tends to infinity and by $c_1$, $c_2$, \dots different absolute constants. We shall also use notation $\precsim$ for an asymptotic inequality.

Using the trick from \cite{ball} one can find an easy estimate from above for the surface area by $e^{\frac{1}{p}-1}n^{1-\frac{1}{p}}.$ The calculation is given in the Section 2, as well as some other important preliminary facts. The upper bound from  Theorem \ref{th:main} is obtained in the Section 3, and the lower bound is shown in the Section 4.

\noindent {\bf Acknowledgment}. I would like to thank Artem Zvavitch and Fedor Nazarov for introducing me to the subject, suggesting me this problem and for extremely helpful and fruitful discussions.

\section{Preliminary lemmas.}

 We remind that $\gamma_p$ is a probability measure on $\R^n$ with density $\varphi_p(y)=c_{n,p}\F$, where $p\in (0,\infty)$. The normalizing constant $c_{n,p}$ equals to $[n\nu_n J_{n-1,p}]^{-1}$, where
\begin{equation}\label{maint}
J_{a,p}=\int_0^{\infty} t^{a} e^{-\frac{t^p}{p}}dt.
\end{equation}
We need to give an asymptotic estimate for $J_{a,p}$.  Our main tool is the Laplace method, which can be found, for example, in \cite{bruign}. For the sake of completeness, we shall present it here:

\begin{lemma}\label{laplace}
Let $h(x)$ be a function on an interval $(a,b)\ni 0$ having at least two continuous derivatives (here $a$ and $b$ may be infinities). Let $0$ be the global maxima point for $h(x)$ and assume for convinience that $h(0)=0$. Assume that for any $\delta>0$ there exist $\eta(\delta)>0$ s.t. for any $x\not\in [-\delta,\delta]$ $h(x)<-\eta(\delta)$. Assume also that $h''(0)<0$ and that the integral $\int_a^b e^{h(x)} dx<\infty$. Then
$$\int_a^b e^{th(x)}dx \approx \sqrt{-\frac{2\pi}{h''(0)t}},\,\,\,\,\,t\rightarrow \infty.$$
\end{lemma}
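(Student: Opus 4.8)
The plan is to run the classical Laplace method argument, splitting the integral into a shrinking central window and an exponentially small tail. First I would record two elementary facts. Since $0$ is a global maximum of $h$ in the \emph{open} interval $(a,b)$ and $h$ is differentiable, Fermat's theorem gives $h'(0)=0$; combining this with $h(0)=0$ and Taylor's formula with Lagrange remainder, $h(x)=\tfrac12 h''(\xi_x)x^2$ for some $\xi_x$ between $0$ and $x$. Now fix $\e>0$ small enough that $h''(0)+\e<0$. By continuity of $h''$ at $0$ there is $\delta=\delta(\e)>0$ with $|h''(\xi)-h''(0)|<\e$ whenever $|\xi|\le\delta$, hence
$$\tfrac12(h''(0)-\e)x^2 \ \le\ h(x)\ \le\ \tfrac12(h''(0)+\e)x^2, \qquad |x|\le\delta.$$

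Next I would split $\int_a^b e^{th(x)}\,dx$ into $\int_{-\delta}^{\delta}$ and $\int_{\{|x|>\delta\}}$. For the tail, the hypothesis $h(x)<-\eta(\delta)$ off $[-\delta,\delta]$ together with $th(x)=h(x)+(t-1)h(x)$ gives, for $t>1$,
$$\int_{\{|x|>\delta\}}e^{th(x)}\,dx \ \le\ e^{-(t-1)\eta(\delta)}\int_a^b e^{h(x)}\,dx,$$
which, since $\int_a^b e^{h}<\infty$, decays faster than any power of $t$; in particular it is $o(t^{-1/2})$. For the central part, the two-sided quadratic bound yields
$$\int_{-\delta}^{\delta}e^{\frac t2(h''(0)+\e)x^2}\,dx \ \le\ \int_{-\delta}^{\delta}e^{th(x)}\,dx \ \le\ \int_{-\delta}^{\delta}e^{\frac t2(h''(0)-\e)x^2}\,dx.$$

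Then I would estimate these Gaussian-type integrals. Writing $\alpha=-\tfrac12(h''(0)+\e)>0$ and $\beta=-\tfrac12(h''(0)-\e)>0$ and substituting $u=\sqrt t\,x$: the right-hand side is $\le t^{-1/2}\int_{\R}e^{-\beta u^2}\,du=\sqrt{\pi/(\beta t)}$, while the left-hand side equals $t^{-1/2}\int_{-\delta\sqrt t}^{\delta\sqrt t}e^{-\alpha u^2}\,du=t^{-1/2}\bigl(\sqrt{\pi/\alpha}-o(1)\bigr)$ as $t\to\infty$, the correction being an exponentially small Gaussian tail. Multiplying by $\sqrt t$ and adding back the negligible tail gives
$$\sqrt{\pi/\alpha}\,(1-o(1)) \ \le\ \sqrt t\int_a^b e^{th(x)}\,dx \ \le\ \sqrt{\pi/\beta}+o(1).$$
Taking $\liminf$ and $\limsup$ as $t\to\infty$ (with $\e$, hence $\delta$, frozen) and then letting $\e\to0$, so that $\alpha,\beta\to-\tfrac12 h''(0)$, both limits coincide and equal $\sqrt{-2\pi/h''(0)}$, which is exactly the assertion $\int_a^b e^{th(x)}\,dx\approx\sqrt{-2\pi/(h''(0)t)}$.

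The only delicate point is the bookkeeping of the order of limits: $\delta$ depends on $\e$, so one must fix $\e$ first, send $t\to\infty$ with $\delta$ held fixed, and only afterwards let $\e\to0$. This is legitimate precisely because, for each fixed $\delta$, the off-window contribution is exponentially small in $t$, so it does not interfere with the $t^{-1/2}$ scaling of the central term.
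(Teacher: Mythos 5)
Your argument is correct and is essentially the paper's own proof: a Taylor two-sided quadratic bound on a $\delta$-window chosen by continuity of $h''$, an exponentially small tail via $e^{th(x)}=e^{h(x)}e^{(t-1)h(x)}\le e^{h(x)}e^{-(t-1)\eta(\delta)}$ together with $\int_a^b e^{h}<\infty$, and the limit order $t\to\infty$ first, then $\e\to 0$. The only blemish is that your central sandwich has the two Gaussian comparison integrals on the wrong sides (the bound $h(x)\le\tfrac12(h''(0)+\e)x^2$ gives the \emph{upper} Gaussian estimate and $h(x)\ge\tfrac12(h''(0)-\e)x^2$ the lower one), but since both bounds tend to $\sqrt{-2\pi/(h''(0)t)}$ as $\e\to0$ this is a harmless relabeling of $\alpha$ and $\beta$, not a gap.
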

\begin{proof} First, using conditions of the lemma and Teylor formula, for a sufficiently small $h''(0)>>\epsilon>0$ there exist positive $\delta=\delta(\epsilon)$, such that for any $x\in (-\delta,\delta)$ it holds that \\$|h(x)-\frac{h''(0)x^2}{2}|\leq \frac{\epsilon x^2}{2}.$ Thus the integral
\begin{equation}\label{delta}
\int_{-\delta}^{\delta} e^{th(x)} dx\leq \frac{1}{\sqrt{-(h''(0)+\epsilon)}}\int_{-\delta\sqrt{-(h''(0)+\epsilon)}}^{\delta\sqrt{-(h''(0)+\epsilon)}} e^{\frac{ty^2}{2}} dy\leq \sqrt{-\frac{2\pi}{(h''(0)+\epsilon)t}}.
\end{equation}
Note that for any constant $C>0$,
\begin{equation}\label{hvost}
\int_C^{\infty} e^{\frac{-ty^2}{2}}dy\geq e^{\frac{-(t-1)C^2}{2}} \int_C^{\infty} e^{-\frac{y^2}{2}} dy=C' e^{-C'' t},
\end{equation}
thus (\ref{delta}) is asymptotically equivalent to $\sqrt{-\frac{2\pi}{(h''(0)+\epsilon)t}}$. It remains to prove that the whole integral is coming from the small interval about zero under the lemma conditions on $h(x)$. Indeed, for an arbitrary $\epsilon$ we choose $\delta(\epsilon)$, and then by condition of the lemma, we pick $\eta(\delta)=\eta(\epsilon)$, so that
$$\int_{(a, -\delta)\cup(\delta, b)} e^{th(x)}\leq e^{-(t-1)\eta(\delta)}\int_{a}^{b} e^{h(x)} dx=C'e^{-C''t}.$$
Thus,
$$\int_a^b e^{th(x)}dx \precsim \sqrt{-\frac{2\pi}{(h''(0)+\epsilon)t}}.$$
Similarly to (\ref{delta}) and by (\ref{hvost}), the reverse inequality holds:
$$\sqrt{-\frac{2\pi}{(h''(0)-\epsilon)t}} \precsim\int_a^b e^{th(x)}dx, ,\,\,\,\,\,t\rightarrow \infty.$$
Taking $\epsilon$ small enough we finish the proof.
\end{proof}
We will now apply the Laplace's method to deduce the asymptotic estimate for $J_{a,p}$.

\begin{lemma}\label{integral_s_a}
Let $p>0$. Then
$$J_{a,p} \approx \sqrt{\frac{2\pi}{p}} a^{\frac{1}{p}-\frac{1}{2}} a^{\frac{a}{p}} e^{-\frac{a}{p}},\,\,\mbox{ as }\,\, a\rightarrow \infty.$$
\end{lemma}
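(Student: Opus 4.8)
The plan is to apply Lemma~\ref{laplace} to the integral $J_{a,p}=\int_0^\infty t^a e^{-t^p/p}\,dt$ after rewriting it in the exponential form $\int_0^\infty e^{a\log t - t^p/p}\,dt$. The exponent $g(t)=a\log t - t^p/p$ has a unique interior maximum; setting $g'(t)=a/t - t^{p-1}=0$ gives the critical point $t_0=a^{1/p}$, with $g(t_0)=\tfrac{a}{p}\log a - \tfrac{a}{p}$. To put the problem in the normalized shape required by Lemma~\ref{laplace} (global max at $0$, value $0$ there, a large parameter $t$ multiplying a fixed function), I would substitute $t = t_0 s = a^{1/p}s$, so that $dt = a^{1/p}\,ds$ and
$$J_{a,p}= a^{1/p}\int_0^\infty e^{\,a\log(a^{1/p}s) - a s^p/p}\,ds = a^{1/p}\,a^{a/p}\int_0^\infty e^{\,\frac{a}{p}(p\log s - s^p)}\,ds.$$
Now let $h(s) = \tfrac{1}{p}\bigl(p\log s - s^p\bigr) + \tfrac1p$ (the additive constant $\tfrac1p$ normalizes so that the max value is $0$), centered by a further shift $s\mapsto s+1$ if one wants the max literally at the origin; the large parameter is $a\to\infty$. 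One checks $h$ has its global maximum at $s=1$ (value $0$ after normalization), and $h''(1) = \tfrac1p(-1 - (p-1)) = -1$, so $-h''(1)=1$.

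The remaining steps are to verify the technical hypotheses of Lemma~\ref{laplace}: that $h$ has two continuous derivatives on $(0,\infty)$ (clear, since $\log s$ and $s^p$ are smooth there), that $h(s)\to-\infty$ as $s\to 0^+$ and as $s\to\infty$ with the uniform decay condition (i.e. $h$ is bounded away from $0$ outside any neighborhood of the maximizer — this follows from strict concavity-like behavior of $p\log s - s^p$, or more simply from the fact that $h$ is increasing then decreasing with the stated limits), and that $\int_0^\infty e^{h(s)}\,ds<\infty$ (which holds since near $0$ the integrand behaves like $s$ and near $\infty$ like $e^{-s^p/p}$). With these in place, Lemma~\ref{laplace} yields
$$\int_0^\infty e^{a h(s)}\,ds \approx \sqrt{\frac{2\pi}{-h''(1)\,a}} = \sqrt{\frac{2\pi}{a}}.$$
Undoing the normalizing constant $e^{\,a/p}$ that was absorbed into $h$ (i.e. $\int_0^\infty e^{\frac{a}{p}(p\log s - s^p)}\,ds = e^{-a/p}\int_0^\infty e^{ah(s)}\,ds$), and combining with the prefactors $a^{1/p}\,a^{a/p}$ already pulled out, gives
$$J_{a,p} \approx a^{1/p}\,a^{a/p}\,e^{-a/p}\,\sqrt{\frac{2\pi}{a}} = \sqrt{\frac{2\pi}{a}}\,a^{1/p}\,a^{a/p}\,e^{-a/p},$$
which, writing $\sqrt{2\pi/a}\cdot a^{1/p} = \sqrt{2\pi}\,a^{1/p-1/2}$, is exactly the claimed asymptotic once one notes the factor $\sqrt{1/p}$: in fact the cleanest route rescales by $t = (a/p)^{1/p} \cdot (\text{something})$... to get the $\sqrt{2\pi/p}$ one should instead substitute so that the coefficient of the quadratic term carries the $1/p$; concretely, keeping $h(s)=p\log s - s^p$ un-divided and using parameter $t=a/p$ makes $h''(1)=-p$ and produces the factor $\sqrt{2\pi/(p\cdot(a/p))}=\sqrt{2\pi/a}$ — so the stated $\sqrt{2\pi/p}$ must come from also rescaling $t_0=(a/p)^{1/p}$ rather than $a^{1/p}$ as the center; I would choose the substitution $t=(a/p)^{1/p}s$ at the outset, which replaces $a^{a/p}$ by $(a/p)^{a/p}$ and, after re-expanding, recovers precisely $\sqrt{2\pi/p}\,a^{1/p-1/2}a^{a/p}e^{-a/p}$.

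The main obstacle is purely bookkeeping: getting the constant and all the fractional powers of $a$ and $p$ to land in exactly the form stated, since several equivalent substitutions distribute the $1/p$ factors differently between the prefactor pulled out of the integral and the $h''$ appearing under the square root. I would fix one substitution — $t=(a/p)^{1/p}s$ — carry it through carefully, and double-check the final exponents by a dimensional/scaling sanity check (e.g. specializing to $p=2$, where $J_{a,2}=\int_0^\infty t^a e^{-t^2/2}\,dt$ has a known closed form in terms of the Gamma function, and comparing with Stirling's formula). Verifying the uniform-decay hypothesis of Lemma~\ref{laplace} is routine but should be stated explicitly.
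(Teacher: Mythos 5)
Your overall strategy is exactly the paper's: substitute $t=a^{1/p}s$, pull out $a^{a/p}e^{-a/p}a^{1/p}$, and apply Lemma~\ref{laplace} at the maximizer $s=1$. But your main computation contains a concrete arithmetic error that loses the factor $\sqrt{1/p}$, and your attempted repair is not carried out. With your normalization $h(s)=\tfrac1p\bigl(p\log s-s^p\bigr)+\tfrac1p=\log s-\tfrac{s^p}{p}+\tfrac1p$, one has $h'(s)=\tfrac1s-s^{p-1}$ and $h''(s)=-\tfrac1{s^2}-(p-1)s^{p-2}$, so $h''(1)=-p$, not $-1$; you divided by $p$ twice. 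With the correct value, Lemma~\ref{laplace} applied with large parameter $a$ gives $\int_0^\infty e^{ah(s)}\,ds\approx\sqrt{2\pi/(pa)}$, and the bookkeeping then produces exactly $\sqrt{\tfrac{2\pi}{p}}\,a^{\frac1p-\frac12}a^{\frac ap}e^{-\frac ap}$ — there is no missing $\sqrt{1/p}$ to chase. Your second attempt has the same slip: for the un-divided $h(s)=p\log s-s^p$ one gets $h''(1)=-p-p(p-1)=-p^2$ (not $-p$), and with parameter $a/p$ this again yields $\sqrt{2\pi/(p^2\cdot a/p)}=\sqrt{2\pi/(pa)}$; this is precisely the paper's route, which takes $h(x)=p\log x-x^p+1$ and parameter $a/p$.

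The closing paragraph, where you switch to the substitution $t=(a/p)^{1/p}s$ and assert that it ``recovers precisely'' the stated asymptotic, is hand-waving rather than proof: that substitution does not even center at the maximizer (the maximum of the exponent sits at $t_0=a^{1/p}$, i.e.\ at $s=p^{1/p}$ in the new variable), so the claim that it simply replaces $a^{a/p}$ by $(a/p)^{a/p}$ is incomplete — the value of the exponent at the new critical point restores the factor $p^{a/p}$, and one ends up with the same answer as before, not a different one. So the change of substitution neither fixes nor causes the discrepancy; the discrepancy was only the miscomputed $h''(1)$. The remaining hypotheses of Lemma~\ref{laplace} that you flag (two continuous derivatives, $h$ increasing on $(0,1)$ and decreasing on $(1,\infty)$ with $h\to-\infty$ at both ends, hence bounded away from $0$ off any neighborhood of $1$, and $\int_0^\infty e^{h(s)}\,ds=e^{1/p}\int_0^\infty s\,e^{-s^p/p}\,ds<\infty$) are indeed routine and verified just as you indicate, matching the paper. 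In short: right method, wrong second derivative; fix $h''(1)=-p$ (or $-p^2$ in the unnormalized form) and delete the speculative rescaling, and the proof coincides with the paper's.
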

\begin{proof} We notice:
$$\int_0^{\infty} t^a e^{-\frac{t^p}{p}} dt =a^{\frac{a}{p}} e^{-\frac{a}{p}} \int_0^{\infty} e^{\frac{a}{p} \left(\log \frac{t^p}{a} - \frac{t^p}{a} +1\right)} dt=a^{\frac{a}{p}} e^{-\frac{a}{p}} a^{\frac{1}{p}} \int _0^{\infty} e^{\frac{a}{p} h(x)} dx,$$
where $h(x)=p\log x - x^p +1$.

Note that $h(1)=h'(1)=0$, and in addition $h''(1)=-p-p(p-1)=-p^2<0$. Also, $\int_{0}^{\infty} e^{h(x)}  dx=\int_0^{\infty} e ^{-c(p)x^p} dx<\infty.$ For any $\delta>0$ it holds that $h(x)<\eta(\delta)=-C(p) \delta^p$ outside of the interval $[-\delta,\delta]$. So one can apply Lemma \ref{laplace} to finish the proof.
\end{proof}
Next we shall observe that the surface area is mostly concentrated in a  narrow annulus. Define $\Delta_p=1-e^{-\frac{1}{p}}$. Note that $\Delta_p\in (0,1)$ while $p>0$. Let 
$$A_p=(1+\Delta_p)(n-1)^{\frac{1}{p}}B^n_2\setminus (1-\Delta_p)(n-1)^{\frac{1}{p}}B_2^n;$$ 
we shall call $A_p$ the concentration annulus.

\begin{lemma}\label{ball}
There exist  positive constants $C'(p)$ and $C''(p)$, depending on $p$ only, such that $\gamma_p(\partial Q \cap A_p^c)\leq C'(p)e^{-C''(p)n}$ for  any  convex body  $Q \subset \R^n$.
\end{lemma}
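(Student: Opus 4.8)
The idea is to bound $\gamma_p(\partial Q \cap A_p^c)$ by controlling separately the part of $\partial Q$ that lies outside the large ball $(1+\Delta_p)(n-1)^{1/p}B_2^n$ and the part inside the small ball $(1-\Delta_p)(n-1)^{1/p}B_2^n$. For the outer part, the key is Ball's observation (the "trick from \cite{ball}" alluded to in Section 1): for a convex body $Q$, the surface measure $\sigma$ restricted to $\partial Q$ outside a ball of radius $R$ can be compared to the surface measure on the sphere $RS^{n-1}$ by radial projection, which is distance-decreasing on the part of $\partial Q$ visible from outside; more precisely, one writes
$$\gamma_p(\partial Q \setminus RB_2^n) \le c_{n,p}\int_{\partial Q \setminus RB_2^n}\F\,d\sigma(y) \le c_{n,p}\,n\nu_n\int_R^\infty r^{n-1}e^{-\frac{r^p}{p}}\,dr,$$
using that the outward normal cone structure of a convex body lets one dominate the integral over $\partial Q$ by the integral over concentric spheres (one integrates the bound $\int_{\partial Q\cap(rS^{n-1})}d\sigma \le nr^{n-1}\nu_n \cdot(\text{something})$, or more cleanly uses that projecting $\partial Q\setminus RB_2^n$ onto $RS^{n-1}$ contracts surface area and then peels off spheres). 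The resulting tail integral is $n\nu_n c_{n,p}(J_{n-1,p} - \int_0^R r^{n-1}e^{-r^p/p}dr) = 1 - \gamma_p(RB_2^n)$ with $R = (1+\Delta_p)(n-1)^{1/p}$.

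For the inner part, one simply bounds the density on $(1-\Delta_p)(n-1)^{1/p}B_2^n$ by its maximum on that ball, $c_{n,p}e^{-\frac{(1-\Delta_p)^p(n-1)}{p}}$, and the surface area $\sigma(\partial Q \cap rB_2^n)$ of a convex body inside a ball of radius $r$ is at most $\sigma(rS^{n-1}) = n\nu_n r^{n-1}$ (monotonicity of surface area under inclusion of convex bodies). This gives
$$\gamma_p(\partial Q \cap (1-\Delta_p)(n-1)^{1/p}B_2^n) \le c_{n,p}\, n\nu_n\,((1-\Delta_p)(n-1)^{1/p})^{n-1} e^{-\frac{(1-\Delta_p)^p(n-1)}{p}}.$$
Now $c_{n,p}n\nu_n = 1/J_{n-1,p}$, so by Lemma \ref{integral_s_a} with $a = n-1$ we can replace $1/J_{n-1,p}$ by its asymptotics $\sqrt{p/(2\pi)}\,(n-1)^{\frac12-\frac1p}(n-1)^{-\frac{n-1}{p}}e^{\frac{n-1}{p}}$, up to a constant factor. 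The inner bound then becomes, up to polynomial-in-$n$ and $p$-dependent factors,
$$\exp\!\left(\frac{n-1}{p}\bigl(1 - (1-\Delta_p)^p + p\log(1-\Delta_p)\bigr)\right).$$
One checks that $g(s) := 1 - (1-s)^p + p\log(1-s)$ satisfies $g(0)=0$, $g'(s) = p(1-s)^{p-1} - \frac{p}{1-s} = \frac{p}{1-s}((1-s)^p - 1) < 0$ for $s\in(0,1)$, so $g(\Delta_p) < 0$, giving exponential decay $e^{-C''(p)n}$. The same computation handles the outer tail: writing $R = (1+\Delta_p)(n-1)^{1/p}$, one estimates $\int_R^\infty r^{n-1}e^{-r^p/p}dr$ by the Laplace-type bound and divides by $J_{n-1,p}$, producing $\exp\!\left(\frac{n-1}{p}(1-(1+\Delta_p)^p+p\log(1+\Delta_p))\right)$ times polynomial factors; here $g(-s) = 1-(1+s)^p + p\log(1+s)$ has $g(0)=0$ and derivative $\frac{d}{ds}g(-s) = -p(1+s)^{p-1}+\frac{p}{1+s} = \frac{p}{1+s}(1-(1+s)^p) < 0$, so again $g(-\Delta_p)<0$ and we get exponential decay. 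Taking $C''(p)$ to be the minimum of the two exponential rates and $C'(p)$ large enough to absorb the polynomial prefactors finishes the proof.

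The main obstacle is the first step: making rigorous the claim that for a convex body $Q$, $\int_{\partial Q \setminus RB_2^n}\F\,d\sigma(y) \le n\nu_n\int_R^\infty r^{n-1}e^{-r^p/p}\,dr$, i.e. that among convex bodies the "spherical shell" maximizes the weighted surface area outside a given radius. The clean way is via the radial (gnomonic) projection $\pi_R : \R^n\setminus\{0\}\to RS^{n-1}$, $\pi_R(y) = Ry/|y|$: on the outer part of $\partial Q$ (the points of $\partial Q$ outside $RB_2^n$, which by convexity project injectively onto a subset of $RS^{n-1}$ in a controlled way) one shows the Jacobian of $\pi_R$ is at most $(R/|y|)^{n-1} \le 1$, and then uses that $e^{-|y|^p/p}$ is radially decreasing to compare with the integral over spheres. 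This is exactly the argument from \cite{ball} and is also implicit in the "$e^{1/p-1}n^{1-1/p}$" estimate promised in Section 2; for the purposes of this lemma one only needs the crude inequality, not a sharp constant, so the bookkeeping is mild, but it does require care about the subset of $\partial Q$ that is "visible from infinity" versus the rest, which is handled by convexity (every supporting hyperplane at a point of $\partial Q$ outside $RB_2^n$ separates it from a large portion of $RB_2^n$).
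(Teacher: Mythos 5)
Your overall plan is the paper's: split $A_p^c$ into the inner ball and the outer region, compare the surface measure of $\partial Q$ with that of spheres, divide by the Laplace asymptotics of $J_{n-1,p}$ from Lemma \ref{integral_s_a}, and check that the resulting exponential rate is negative (a computation the paper leaves implicit and you usefully make explicit). However, two of your key intermediate inequalities are not justified as written. For the inner part, the density $c_{n,p}e^{-\frac{|y|^p}{p}}$ is radially \emph{decreasing}, so its maximum on $(1-\Delta_p)(n-1)^{\frac1p}B_2^n$ is $c_{n,p}$, attained at the origin; the value $c_{n,p}e^{-\frac{(1-\Delta_p)^p(n-1)}{p}}$ you invoke is its \emph{minimum} there. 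The exponential factor in your displayed inner bound therefore does not follow from this reasoning, and it is exactly the factor you need: with the honest bound (density $\le c_{n,p}$, surface measure $\le n\nu_n r^{n-1}$) and the specific choice $\Delta_p=1-e^{-\frac1p}$ one has $(1-\Delta_p)e^{\frac1p}=1$, so the estimate is $\approx\sqrt{p/(2\pi)}\,n^{\frac12-\frac1p}$, only polynomial — in your notation the surviving exponent is $1+p\log(1-\Delta_p)=0$, and the term $-(1-\Delta_p)^p$ that makes $g(\Delta_p)=-e^{-1}<0$ is precisely the unjustified piece.

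For the outer part, the inequality $\int_{\partial Q\setminus RB_2^n}e^{-\frac{|y|^p}{p}}d\sigma\le n\nu_n\int_R^\infty r^{n-1}e^{-\frac{r^p}{p}}dr$ is false in general: radial projection onto $RS^{n-1}$ \emph{contracts} area, which bounds the image rather than the piece of $\partial Q$ you want, and indeed the sphere of radius $R+\e$ (which lies outside $RB_2^n$) has weighted surface area exceeding your right-hand side by a factor of order $R^{p-1}-\frac{n-1}{R}\asymp n^{1-\frac1p}$ when $p>1$. The correct statement — and the paper's actual argument — comes from the identity $e^{-\frac{|y|^p}{p}}=\int_{|y|}^\infty t^{p-1}e^{-\frac{t^p}{p}}dt$ combined with $|\partial Q\cap tB_2^n|\le n\nu_n t^{n-1}$, which gives $\int_{\partial Q''}e^{-\frac{|y|^p}{p}}d\sigma\le n\nu_n\int_R^\infty t^{n+p-2}e^{-\frac{t^p}{p}}dt$; the extra $t^{p-1}$ costs only a polynomial factor, so your rate $1-(1+\Delta_p)^p+p\log(1+\Delta_p)<0$ then finishes that case. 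The same identity also repairs your inner half: if $\partial Q'$ lies in $rB_2^n$ then $|\partial Q'\cap tB_2^n|\le n\nu_n\min(t,r)^{n-1}$, hence $\int_{\partial Q'}e^{-\frac{|y|^p}{p}}d\sigma\le n\nu_n\bigl(\int_0^r t^{n+p-2}e^{-\frac{t^p}{p}}dt+r^{n-1}e^{-\frac{r^p}{p}}\bigr)$, and both terms over $J_{n-1,p}$ are exponentially small exactly because $g(\Delta_p)<0$ — this is where the factor $e^{-\frac{r^p}{p}}$ legitimately enters (the paper's own write-up of the inner case simply drops the density factor, so the decay really must come from this layer-cake comparison rather than from a pointwise density bound). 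In short, the skeleton and the rate computations are right, but both crucial inequalities need the Fubini/layer-cake argument you only allude to; as written the proposal has genuine gaps.
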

\begin{proof} First, assume that $|y|<(1-\Delta_p)(n-1)^{\frac{1}{p}}$ for any $y\in \partial Q'$. Then
\begin{equation}\label{smallpart}
\gamma_p(\partial Q')\leq \frac{1}{n\nu_n J_{n-1,p}}\int_{\partial Q'} e^{-\frac{|y|^p}{p}} d\sigma(y)\leq \frac{|\partial Q'|}{n\nu_n J_{n-1,p}}.
\end{equation}
Since $Q'\subset (1-\Delta_p)(n-1)^{\frac{1}{p}}B_2^n$, it holds that $|\partial Q'|\leq (1-\Delta_p)^{n-1}(n-1)^{\frac{n-1}{p}}n\nu_n$. By the choice of $\Delta_p$, (\ref{smallpart}) is exponentially small.

Assume now that for any $y\in \partial Q''$ it holds that $|y|>(1+\Delta_p)(n-1)^{\frac{1}{p}}$. We can rewrite the expression for $\gamma_p(\partial Q'')$ using a trick from \cite{ball}. Notice, that
$$e^{-\frac{|y|^p}{p}} = \int_{|y|}^{\infty} t^{p-1} e^{-\frac{t^p}{p}} dt=\int_0^{\infty} t^{p-1} e^{-\frac{t^p}{p}} \chi_{[-t,t]}(|y|) dt.$$
Under this assumptions on $y$, for any $t\leq (1+\Delta_p)(n-1)^{\frac{1}{p}}$ it holds that $\chi_{[-t,t]}(|y|)=0$ and
$$e^{-\frac{|y|^p}{p}} = \int_{(1+\Delta_p)(n-1)^{\frac{1}{p}}}^{\infty} t^{p-1} e^{-\frac{t^p}{p}} \chi_{[-t,t]}(|y|) dt.$$
Thus
\begin{eqnarray*}
\gamma_p(\partial Q'')&=&\frac{1}{n\nu_n J_{n-1,p}}\int_{\partial Q''} e^{-\frac{|y^p|}{p}} d\sigma(y)\\
&=&\frac{1}{n\nu_n J_{n-1,p}}\int_{\partial Q''} \int_{(1+\Delta_p)(n-1)^{\frac{1}{p}}}^{\infty} t^{p-1} e^{-\frac{t^p}{p}} \chi_{[-t,t]}(|y|) dt d\sigma(y)\\
& =& \frac{1}{n\nu_n J_{n-1,p}}\int_{(1+\Delta_p)(n-1)^{\frac{1}{p}}}^{\infty} t^{p-1} e^{-\frac{t^p}{p}} |\partial Q''\cap tB_2^n| dt\\
&\leq& \frac{1}{ J_{n-1,p}}\int_{(1+\Delta_p)(n-1)^{\frac{1}{p}}}^{\infty} t^{n+p-2} e^{-\frac{t^p}{p}} dt.
\end{eqnarray*}
From the previous lemmas it is clear that for any constant $\delta>0$, we get
$$\int_{(1+\delta)(n-1)^{\frac{1}{p}}}^{\infty} t^{n+p-2} e^{-\frac{t^p}{p}} dt\leq C'(p)e^{-C''(p)n},$$
for some positive $C'(p)$ and $C''(p)$. Thus
$$\gamma_p(\partial Q'')\leq \frac{C'(p)e^{-C''(p)n}}{n^{-\frac{1}{2}} n^{\frac{n}{p}} e^{-\frac{n}{p}}},$$
which is exponentially small as well.
\end{proof}

Note, that using same trick from \cite{ball}, one can obtain a rough bound for $\gamma_p$-surface area of a convex body. Namely,

$$\gamma_p(\partial Q)=\frac{1}{n\nu_n J_{n-1,p}}\int_{\partial Q} e^{-\frac{|x|^p}{p}}dx= \frac{1}{n\nu_n J_{n-1,p}}\int_0^{\infty}t^{p-1}e^{-\frac{t^p}{p}}|\partial Q\cap t B_2^n|dt\leq$$
$$\frac{J_{n+p-2,p}}{J_{n-1,p}}\approx n^{1-\frac{1}{p}},,\,\,\,\,\,n\rightarrow \infty.$$
This bound is not best possible. The next section is dedicated to the best possible asymptotic upper bound.

\section {Upper bound}

We will use the approach developed by Nazarov in \cite{fedia}. Let us consider ''polar'' coordinate system $x=X(y,t)$ in $\R^n$ with $y\in \partial Q$, $t>0$. Then
$$\int_{\R^n}\varphi_p(y)d\sigma(y)=\int_0^{\infty}\int_{\partial Q} D(y,t)\varphi_p(X(y,t)) d\sigma(y)dt,$$
where $D(y,t)$ is a Jacobian of $x\rightarrow X(y,t)$. Define
\begin{equation}\label{xi}
\xi(y)=\varphi_p^{-1}(y)\int_0^{\infty} D(y,t)\varphi_p(X(y,t)) dt.
\end{equation}
Then
$$1=\int_{\partial Q} \varphi_p(y)\xi(y)dy,$$
and thus
$$\int_{\partial{Q}}\varphi_p(y)dy\leq \frac{1}{\min\limits_{y\in\partial Q} \xi(y)}.$$
Following \cite{fedia}, we shall consider two such systems.

\subsection{First coordinate system}

Consider ''radial'' polar coordinate system $X_1(y,t)=yt$. The Jacobian $D_1(y,t)=t^{n-1}|y|\alpha$, where $\alpha=\alpha(y)$, denotes the absolute value of cosine of an angle
between $y$ and $\nu_y$. Here $\nu_y$ stands for a normal vector at $y$. From (\ref{xi}),
\begin{equation}\label{xi1}
\xi_1(y)=e^{\frac{|y|^p}{p}}\alpha |y|^{1-n}J_{n-1}\approx \sqrt{\frac{2\pi}{p}} e^{\frac{|y|^p}{p}}\alpha |y|^{1-n} n^{\frac{1}{p}-\frac{1}{2}} e^{F((n-1)^{\frac{1}{p}})},\,\,\,\,\,n\rightarrow\infty,
\end{equation}
where $F(t)=(n-1)\log t-\frac{t^p}{p}$. Since $(n-1)^{\frac{1}{p}}$ is the maxima point for $F(t)$, for all $y\in \R^n$, $F((n-1)^{\frac{1}{p}})\geq F(|y|)$. So we can estimate (\ref{xi1}) from below by

\begin{equation}\label{xi1final}
\xi_1(y)\gtrsim \sqrt{\frac{2\pi}{p}} n^{\frac{1}{p}-\frac{1}{2}}\alpha.
\end{equation}

\subsection{Second coordinate system}

Now consider ''normal'' polar coordinate system $X_2(y,t)=y+t\nu_y$. Then $D_2(y,t)\geq 1$ for all $y\not\in Q$. Thus, by cosine rule, namely, $|x+y|^2=x^2+y^2-2xy\cos \beta,$
where $\beta$ is an angle between vectors $x$ and $y$, we get:
\begin{equation}\label{startxi2}
\xi_2(y)\geq e^{\frac{|y|^p}{p}}\int_0^{\infty}e^{-\frac{(|y|^2+t^2+2t|y|\alpha)^{\frac{p}{2}}}{p}}dt.
\end{equation}
Note, that for any positive function $f(x)$ defined on the interval $I$,
\begin{equation}\label{chebychev1}
\int_{I} e^{-f(t)}dt\geq e^{-f(t_0)}|\{t:\,f(t)<f(t_0)\}\cap I|.
\end{equation}
Consider
$$f(t)=\frac{(|y|^2+t^2+2t|y|\alpha)^{\frac{p}{2}}}{p}.$$
By intermediate value theorem there is $t_1$ such that
\begin{equation}\label{t1}
(|y|^2+t_1^2+2t_1|y|\alpha)^{\frac{p}{2}}=|y|^p+1.
\end{equation}
Since $f(t)$ is increasing, from (\ref{chebychev1}) and (\ref{t1}) we get
$$\xi_2(y)\geq e^{-\frac{1}{p}}t_1.$$
Now we need to estimate $t_1$ from below. Using (\ref{t1}) and taking $y\in A_p$, we apply Mean Value Theorem and get
$$t_1=\sqrt{\alpha^2|y|^2-|y|^2+(|y|^p+1)^{\frac{2}{p}}}-\alpha|y|\approx \sqrt{\alpha^2|y|^2+\frac{2}{p}|y|^{2-p}}-\alpha|y|.$$
Multiplying the last expression by a conjugate and applying the inequality $\sqrt{a+b}\leq \sqrt{a}+\sqrt{b}$, we get:
\begin{equation}\label{xi2}
\xi_2(y)\geq e^{-\frac{1}{p}}\sqrt{\frac{2}{p}}|y|^{1-\frac{p}{2}}\frac{1}{1+\sqrt{2p}\alpha|y|^{\frac{p}{2}}}.
\end{equation}
Considering (\ref{xi1}) and (\ref{xi2}) with $|y|\in A_p$, we get
\begin{equation}\label{epic}
\xi(y):=\xi_1(y)+\xi_2(y)\gtrsim n^{\frac{1}{p}-\frac{1}{2}}\left(\sqrt{\frac{2\pi}{p}} \alpha+\frac{C}{C_1 \alpha \sqrt{n}+1}\right),
\end{equation}
where $C_1=\sqrt{2p}(2-e^{-\frac{1}{p}})^{\frac{p}{2}}$; for $0<p\leq 2$ $C=\sqrt{\frac{2}{p}}e^{\frac{1}{2}-\frac{2}{p}}$, and for $p\geq 2$ \\$C=\sqrt{\frac{2}{p}} e^{-\frac{1}{p}}(2-e^{-\frac{1}{p}})^{1-\frac{p}{2}}$.

Note that (\ref{epic}) is minimized whenever $\alpha=\sqrt[4]{\frac{p}{2\pi}}\sqrt{\frac{C}{C_1}}n^{-\frac{1}{4}}$. The minimal value of (\ref{epic}) is $C(p)^{-1} n^{\frac{1}{p}-\frac{3}{4}}$, where $C(p)=2\sqrt[4]{\frac{2\pi}{p}}\sqrt{\frac{C}{C_1}}$. This implies, that
$$\gamma_p(\partial Q\cap A_p)\leq C(p) n^{\frac{3}{4}-\frac{1}{p}}.$$
One can note that $C(p)$ tends to infinity while $p$ tends to infinity or to zero. Applying Lemma \ref{ball}, we finish the proof of the upper bound from the Theorem 1.

\begin{remark}
It was noticed by Nazarov, that his construction in \cite{fedia} also implies that any polytope $P_K$ with $K$ faces has Gaussian surface area bounded by $C\sqrt{\log K}$. The same way, in the general case $\gamma_p(\partial P_K)\leq C(p) n^{\frac{1}{2}-\frac{1}{p}} \sqrt{\log K}.$ Indeed, let $H(\rho)$ be a  hyperplane distanced at $\rho$ from the origin. By the Mean Value Theorem, the surface area of $H(\rho)$ is bounded from above by
\begin{equation}\label{hyp-rho}
\frac{1}{\sqrt{2\pi}} n^{\frac{1}{2}-\frac{1}{p}} e^{-\frac{\rho^2}{2}n^{1-\frac{2}{p}}}.
\end{equation}
By (\ref{xi2}), and since $\alpha |y|=\rho$ for $y\in H(\rho)$, we note that
\begin{equation}\label{polytope}
\gamma_p(\partial P_K)\lesssim \sum_{\rho\geq \sqrt{2\log K}n^{\frac{1}{p}-\frac{1}{2}}} \gamma_p (H(\rho)) + \left(e^{-\frac{1}{p}}\frac{2}{p}\frac{|y|^{2-p}}{\sqrt{\frac{2}{p}}|y|^{1-\frac{p}{2}}+2\sqrt{2\log K}n^{\frac{1}{p}-\frac{1}{2}}}\right)^{-1}.
\end{equation}

The first summand is about a constant times $n^{\frac{1}{2}-\frac{1}{p}}$ (by (\ref{hyp-rho})). The second summand is bounded by $C(p) n^{\frac{1}{2}-\frac{1}{p}} \sqrt{\log K}$.
\end{remark}

\section{Lower bound}
Let's consider N uniformly distributed random vectors $x_i\in S^{n-1}$. Let $\rho=n^{\frac{1}{p}-\frac{1}{4}}$ and $r=r_w=n^{\frac{1}{p}}+w$, where $w\in [-W,W]$, and $W=n^{\frac{1}{p}-\frac{1}{2}}$. Consider random polytope $Q$ in $\R^{n}$, defined as follows:
$$Q=\{x\in \R^{n} :\, <x,x_i>\leq \rho, \,\,\,\forall i=1,...,N\}.$$
The expectation of $\gamma_p(\partial Q)$ is
\begin{equation}\label{expectation}
\frac{1}{n\nu_n J_{n-1}} N\int_{\R^{n-1}} \exp(-\frac{(|y|^2+\rho^2)^{\frac{p}{2}}}{p})(1-p(|y|))^{N-1}dy,
\end{equation}
where $p(t)$ is the probability that the fixed point on the sphere of radius $\sqrt{t^2+\rho^2}$ is separated from the origin by hyperplane $<x,x_i>=\rho$.

Passing to polar coordinates, we shall estimate (\ref{expectation}) from below by
\begin{equation}\label{expectation1}
\frac{\nu_{n-1}}{\nu_n J_{n-1,p}} N\int_{W}^{W} f(n^{\frac{1}{p}}+w)(1-p(r_w))^{N-1}dy,
\end{equation}
where $f(t)=t^{n-2} e^{-\frac{(t^2+\rho^2)^{\frac{p}{2}}}{p}}$. Note, that $\frac{\nu_{n-1}}{\nu_n}\approx \frac{\sqrt{n}}{\sqrt{2\pi}}$. Thus we estimate (\ref{expectation1}) from below by
\begin{equation}\label{expectation2}
\frac{1}{\sqrt{2\pi}} n n^{-\frac{n}{p}} e^{\frac{n}{p}} f(n^{\frac{1}{p}}+W) N\int_{W}^{W} (1-p(r_w))^{N-1}dy.
\end{equation}
Next,

$$f(n^{\frac{1}{p}}+W)\geq n^{\frac{n-2}{p}}(1+n^{-\frac{1}{2}})^{n-2} e^{-\frac{n}{p}} e^{-\frac{3}{2}\sqrt{n}}\approx n^{\frac{n}{p}} e^{-\frac{n}{p}} n^{-\frac{2}{p}} e^{-\frac{\sqrt{n}}{2}}.$$
Thus (\ref{expectation2}) is greater than

\begin{equation}\label{expectation3}
\frac{1}{\sqrt{2\pi}} n^{1-\frac{2}{p}} e^{-\frac{\sqrt{n}}{2}} N\int_{W}^{W} (1-p(r_w))^{N-1}dy.
\end{equation}
Next, we estimate the probability $p(r)$. The same way, as in \cite{fedia}, by Fubbini Theorem,
\begin{equation}\label{uzhas}
p(r)=(\int_{-\sqrt{r^2+\rho^2}}^{\sqrt{r^2+\rho^2}}(1-\frac{t^2}{r^2+\rho^2})^{\frac{n-3}{2}}dt)^{-1}
\int_{\rho}^{\sqrt{r^2+\rho^2}}(1-\frac{t^2}{r^2+\rho^2})^{\frac{n-3}{2}}dt.
\end{equation}
Directly by Laplace method (or due to the fact that it represents the sphere surface area) the first integral is approximately equal to $\sqrt{2\pi}n^{\frac{1}{p}-\frac{1}{2}}.$

Using an elementary inequality that $1-a\leq e^{-\frac{a^2}{2}}e^{-a}$, for all $a>0$, one can estimate the second integral in (\ref{uzhas}) by
$$\int_{\rho}^{\infty}exp(-\frac{n-3}{4(r^2+\rho^2)^2}t^{4})\cdot exp(-\frac{n-3}{r^2+\rho^2}\frac{t^2}{2})dt$$
$$\leq exp(-\frac{n-3}{4(r^2+\rho^2)^2}\rho^{4})\int_{\rho}^{\infty} exp(-\frac{n-3}{r^2+\rho^2}\frac{t^2}{2})dt.$$
The first multiple is of order $e^{-\frac{1}{4}}$ under these assumptions on $r$ and $\rho$. The second integral can be estimated with
usage of inequality
$$\int_{\rho}^{\infty}e^{-a\frac{t^2}{2}}\leq \frac{1}{a\rho}e^{-a\frac{\rho^2}{2}}.$$
We note that $a\rho^2$ is of order $\frac{n-2}{\rho^2+r^2}\sim n^{\frac{1}{2}}(1-3n^{-\frac{1}{2}})$ up to an additive error $\sim n^{-\frac{1}{2}}$. Hence one can write that
\begin{equation}\label{probability}
p(r)\leq \frac{e^{\frac{5}{4}}}{\sqrt{2\pi}} n^{-\frac{1}{4}} e^{-\frac{\sqrt{n}}{2}}.
\end{equation}
Now, one can choose $N=\frac{\sqrt{2\pi}}{e^{\frac{5}{4}}} n^{\frac{1}{4}} e^{\frac{\sqrt{n}}{2}}$. From (\ref{expectation3}) and (\ref{probability}) it now follows that the expectation of a $\gamma_p$-surface area is greater than
$$e^{-\frac{1}{4}} n^{\frac{3}{4}-\frac{1}{p}},$$
which finishes the proof of the Theorem 1. $\square$

\end{document}